\newcommand\fa{{\mathfrak a}}
\newcommand\fb{{\mathfrak b}}
\newcommand\fc{{\mathfrak c}}
\newcommand\fg{{\mathfrak g}}
\newcommand\fh{{\mathfrak h}}
\newcommand\frk{{\mathfrak k}}
\newcommand\fl{{\mathfrak l}}
\newcommand\fm{{\mathfrak m}}
\newcommand\fn{{\mathfrak n}}
\newcommand\fo{{\mathfrak o}}
\newcommand\fp{{\mathfrak p}}
\newcommand\fq{{\mathfrak q}}
\newcommand\fs{{\mathfrak s}}
\newcommand\ft{{\mathfrak t}}
\newcommand\fu{{\mathfrak u}}
\newcommand\fz{{\mathfrak z}}
\newcommand\bB{{\mathbb B}}
\newcommand\bC{{\mathbb C}}
\newcommand\bF{{\mathbb F}}
\newcommand\bH{{\mathbb H}}
\newcommand\bN{{\mathbb N}}
\newcommand\bR{{\mathbb R}}
\newcommand\bW{{\mathbb W}}
\newcommand\bZ{{\mathbb Z}}
\newtheorem{theorem}{Theorem}[section]
\newtheorem{conjecture}[theorem]{Conjecture}
\newtheorem{proposition}[theorem]{Proposition}
\newtheorem{example}[theorem]{Example}
\newtheorem{remark}[theorem]{Remark}
\newtheorem{definition}[theorem]{Definition}
\newcommand{\bb}[1]{\mathbb{#1}}
\newcommand{\mc}[1]{\mathcal{#1}}
\newcommand{\mf}[1]{\mathfrak{#1}}
\begin{document}
\title[Exceptional Richardson Orbits]{Unipotent Representations of Exceptional Richardson Orbits}
\author{Kayue Daniel Wong}
\address{School of Science and Engineering, the Chinese University of Hong Kong, Shenzhen, Guangdong 518172, P.R. China}
\email{kayue.wong@gmail.com}

\maketitle

\begin{abstract}
We study special unipotent representations attached to complex exceptional Richardson orbits. 
As a consequence, we verify a conjecture of Achar and Sommers for these orbits.   
\end{abstract}

\section{Introduction}

In \cite{BV1985}, Barbasch and Vogan studied 
{\bf special unipotent representations} for complex 
simple Lie groups $G$. These representations are 
of interest in various areas of representation 
theory. For instance, they are conjectured to be the
`building blocks' of the unitary dual of $G$. This idea is verified 
by Vogan \cite{V1} for $G = GL(n,\mathbb{F})$ where $\mathbb{F} = \mathbb{C}, \mathbb{R}$
and $\mathbb{H}$, and by Barbasch \cite{B1989} for all complex classical groups.
Moreover, these representations are conjectured 
to be quantization models of special nilpotent orbits (Conjecture \ref{conj:vogan}). 
The conjecture was shown to be true by Barbasch \cite{B2017} for classical nilpotent orbits, 
and by Losev, Mason-Brown and Matvieievskyi \cite{LMM}, \cite{MM} in general 
(Remark \ref{rmk:vogan}). 

\medskip
A main goal of this manuscript is to study a map defined by Lusztig and Vogan
on nilpotent orbits of complex reductive Lie algebras. More explicitly, let $\mathcal{N}$ 
be the set of nilpotent elements of the Lie algebra $\mathfrak{g}$ of $G$,
and $\mathcal{O} \subset \mathcal{N}$ be a nilpotent orbit. For any
$e \in \mathcal{O}$, let $G_e$ be the stabilizer subgroup of $e$ under conjugation, and
$(G_e)^{\vee}$ be the isomorphism class of irreducible, algebraic representations
of $G_e$. They conjectured that there is a bijection between the sets
\begin{equation} \label{eq-lv}
\Phi: \left\{(\mathcal{O}, \sigma)\ |\ \mathcal{O} \subset \mathcal{N}, \sigma \in (G_e)^{\vee}\ \text{for any}\ e \in \mathcal{O} \right\}\ \longrightarrow \Lambda^+(G),
\end{equation}
where $\Lambda^+(G) \subset \mathfrak{t}^*$ be the collection of highest dominant 
weights of finite dimensional representations of $G$. The conjecture is later proved by Bezrukavnikov,
and the image of $\Phi$ is computed explicitly in some cases -- namely,  
Achar \cite{Ac} computed the bijection $G = GL(n,\mathbb{C})$, and the calculations are later 
simplified by Rush \cite{R}. On the other hand, 
Liang-Zhang \cite{LZ} computed the bijection for minimal nilpotent orbits, and
Zhang \cite{Z} computed the case of $G = G_2(\mathbb{C})$.

\medskip
In \cite{W2}, the author defined a map analogous to \eqref{eq-lv}:
$$\Psi: \left\{(\mathcal{O}, \pi)\ |\ \mathcal{O} \subset \mathcal{N}\ \text{special orbit},
\pi \in (\overline{A}(\mathcal{O}))^{\vee} \right\} \longrightarrow \Lambda^+(G),$$
where $\mathcal{O}$ is a special orbit in the sense of Lusztig, and $\overline{A}(\mathcal{O})$ is 
Lusztig's quotient of the component group $A(\mathcal{O}) := G_e/(G_e)^0$ of $G_e$ (c.f. \cite[Chapters 4, 13]{Lu1}). 
To relate $\Psi$ to special unipotent representations and quantization, note that the domain of $\Psi$ 
is in one-to-one correspondence with the special unipotent representations of $G$. 
Indeed, given the validity of the quantization conjecture mentioned above,
then $\Psi$ is equal to $\Phi$ for $\pi \in (G_e)^{\vee}$
that factors through $G_e \to G_e/(G_e)^0 = A(\mathcal{O}) \to \overline{A}(\mathcal{O})$.
In such cases, one can effectively compute (part of) $\Phi$ by understanding $\Psi$. This idea is pursued in \cite{W2}
for classical nilpotent orbits, so that 
one can prove a conjecture by Achar and Sommers \cite{AS} on the image of $\Phi$ for classical Lie groups.

\medskip
In this manuscript, we compute the image of $\Psi$ 
explicitly for all complex exceptional Richardson orbits. 
As a consequence, we prove that the Achar-Sommers conjecture also holds 
for these orbits (Theorem \ref{thm:nonspecial}).

\section{Preliminaries}

\subsection{Special Unipotent Representations}
Let $G$ be a complex simple Lie group, with maximal compact subgroup $K$. We recall the construction 
of irreducible, admissible $(\mathfrak{g}_{\mathbb{C}},K_{\mathbb{C}})$-modules, where $K_{\mathbb{C}} \cong G$ 
is the complexification of $K$.

\smallskip
Let $H=TA$ be the Cartan decomposition of the Cartan subgroup $H$ of $G$, with $\fh=\ft+\fa$. We make the following identifications:
$$
\fh_{\mathbb{C}}\cong \fh \times \fh, \quad \ft_{\mathbb{C}} =\{(x, -x): x\in\fh\} \cong \fh, \quad \fa_{\mathbb{C}} \cong\{(x, x): x\in \fh\}  \cong \fh.
$$
Let $(\lambda_L, \lambda_R)\in \fh^*\times \fh^*$ such that $\eta:=\lambda_L-\lambda_R$ is integral, and write
\begin{equation}\label{def-eta}
\{\eta\} := w'\eta
\end{equation}
as the unique dominant weight to which $\eta$ is conjugate under the action of the Weyl group $w' \in W := W(\fg,\fh)$.
Write $\nu:=\lambda_L + \lambda_R$. We can view $\eta$ as a weight of $T$ and $\nu$ a character of $A$. Put
$$
X(\lambda_L, \lambda_R):={\rm Ind}_B^G(\bC_{\eta}\otimes \bC_{\nu}\otimes {\rm triv})_{K-{\rm finite}},
$$
where $B\supset H$ is the Borel subgroup of $G$ determined by a choice of positive roots $\Delta_G^+$.
Then we define $J(\lambda_L, \lambda_R)$ be the unique irreducible subquotient of $X(\lambda_L, \lambda_R)$ containing the $K_{\mathbb{C}} \cong G$-type $V^G_{\{\eta\}}$. By \cite{Zh}, every irreducible admissible $(\fg, K)$-module has the form $J(\lambda_L, \lambda_R)$. We will refer to the pair $(\lambda_L, \lambda_R)$ as the {\it Zhelobenko parameter} for the module $J(\lambda_L, \lambda_R)$.

\medskip
Among all $J(\lambda_L,\lambda_R)$'s, we focus on the following collection of irreducible modules:
\begin{definition} \label{def:unip}
Let $G$ be a complex Lie group, and ${}^L\mathcal{O} \subset {}^L\mathfrak{g}$ be a special nilpotent orbit in the Langlands dual
of $\mathfrak{g}$. Writing $\mc{O}$ as the Lusztig-Spaltenstein dual of ${}^L\mc{O}$, and $\lambda_{\mc{O}} := \frac{{}^LH}{2}$  
as one half of the semisimple element in a Jacobson-Morozov triple attached to ${}^L\mc{O}$. Then
the {\bf special unipotent representations} attached to ${}^L\mc{O}$ are given by the set
$$\Pi({}^L\mc{O}) := \{ J(\lambda_{\mc{O}},w\lambda_{\mc{O}})\ |\ AV(J(\lambda_{\mc{O}},w\lambda_{\mc{O}})) \subseteq \overline{\mc{O}} \},$$
where $AV(X)$ is the associated variety of any $(\mathfrak{g}_{\mathbb{C}},K_{\mathbb{C}})$-module $X$ (c.f. \cite[Section 2]{V3}).
\end{definition}
By Corollary 5.18 of 
\cite{BV1985}, $J(\lambda_{\mc{O}},w\lambda_{\mc{O}})$ have associated 
variety greater than or equal to $\overline{\mc{O}}$ for all $w \in W$. 
Therefore $\Pi({}^L\mc{O})$ consists of irreducible representations 
$J(\lambda_{\mc{O}},w\lambda_{\mc{O}})$'s whose associated 
varieties are \textit{precisely} equal to $\overline{\mc{O}}$. Note that by Theorem 1.5 
of \cite{BV1985}, $J(\lambda,w\mu) \cong 
J(x\lambda,xw\mu)$ for all $x \in W$. 
So we can begin with any $W$-conjugate of $\lambda$. 

Here is the main theorem of \cite{BV1985}:
\begin{theorem}[\cite{BV1985}, Theorem III] \label{thm-BV}
The cardinality of $\Pi({}^L\mc{O})$ is equal to $|\overline{A}(\mathcal{O})^{\vee}|$, i.e. 
the number of irreducible, non-isomorphic representations of $\overline{A}(\mathcal{O})$.
Moreover, by writing 
\begin{equation} \label{eq-unip}
\Pi({}^L\mc{O}) = \{ X_{\mathcal{O},\pi}\ |\ \pi \in \overline{A}(\mathcal{O})^{\vee}\},
\end{equation}
then the character formula for each $X_{\mathcal{O},\pi}$ can be obtained explicitly.
\end{theorem}
A detailed proof of Theorem \ref{thm-BV} is given by \cite{BV1985} in the case when $\lambda_{\mc{O}}$
is integral, i.e. ${}^L\mc{O}$ is an even orbit. In the Appendix, we complete the proof of the theorem 
for non-integral $\lambda_{\mc{O}}$.

\subsection{Richardson Orbits}
Although one can get the character formula for all $X_{\mathcal{O},\pi}$ by Theorem \ref{thm-BV}, 
its expression can be quite complicated. Therefore, we focus on the case when $\mathcal{O}$ is Richardson, i.e,
there exists a parabolic subalgebra $\mathfrak{p} = \mf{l} + \mf{n}$ such that $\mathcal{O}$ is the dense
orbit in the $G$-saturation $G\cdot \mf{n}$ of $\mf{n}$.
In particular, their closures can be obtained by the image of the moment map
\begin{equation} \label{eq-moment}
\begin{aligned}
\mu: T^*(G/P) \cong G \times_{P} \mathfrak{n} &\to \overline{\mc{O}}
\end{aligned}
\end{equation}
given by $\mu(g,X) := g\cdot X$ for some parabolic group $P = LN$. We would like to investigate when the above map is birational:
\begin{proposition} \label{prop:birational}
Let $\mc{O}$ be an exceptional Richardson orbit other than 
\begin{equation} \label{eq-exception}
A_4+A_1,\ D_5(a_1)\ \text{in}\ E_7, \quad E_6(a_1)+A_1,\ E_7(a_3)\ \text{in}\ E_8.
\end{equation}
Then there exists a parabolic subgroup $P$ such that the moment map $\mu$ in \eqref{eq-moment} is birational onto $\overline{\mc{O}}$.
\end{proposition}
\begin{proof}
Our choices of $P$ for all Richardson orbits other than the four orbits in \eqref{eq-exception}
are listed in Sections 4 -- 8. It is known that if $[G^e:P^e] = 1$, then $\mu$ is birational. 
This holds when $G_e/(G_e)^0 = A(\mc{O}) = 1$. Also, by the results 
in \cite{McG1}, the map is birational when $\mc{O}$ 
is even (so that the Levi subgroup $L$ is given by the 
nodes marked with $0$'s). By checking the tables in 
\cite{CM}, we are only left with 
\begin{equation} \label{eq:birational}
D_4(a_1)+A_1\ \text{in}\ E_7; \ \ D_6(a_1), D_7(a_2)\ \text{in}\ E_8 
\end{equation}
along with the four orbits mentioned in the proposition. 

Indeed, we can apply Lemma 5.5 in \cite{FJLS} to check birationality
of $\mu$, which says that for any $e \in \mc{O}$, 
the number of components in $\mu^{-1}(e)$ is equal to 
$$\sum_{\psi \in \widehat{A(e)}} \dim V_{\psi} 
\cdot [\sigma_{\mc{O},\psi}: \mathrm{Ind}_{W(L)}^{W}(\mathrm{sgn})],$$ 
where $\sigma_{\mc{O},\psi}$ is the Springer 
representation $H^{top}(\mathcal{B}_e)^{\psi}$. 
One can use the table in \cite{Alv} to check that the above value
is equal to $1$ for the orbits in \eqref{eq:birational}. 
Therefore $\mu$ is birational for all the orbits stated in the proposition.
%
\end{proof}

For all exceptional Richardson $\mathcal{O}$ other than the four orbits in \eqref{eq-exception},
Theorem 9.11 of \cite{BV1985} provides a nice character formula for all $\Pi({}^L\mathcal{O})$. 
\begin{theorem} \label{thm:BV9}
Let $\mc{O}$ be an exceptional Richardson orbit not equal to four orbits in \eqref{eq-exception}, and $\mf{l}$ be 
the Levi subalgebra of the parabolic subgroup corresponding to $\mc{O}$ given by Proposition \ref{prop:birational}, then the following statements hold:
\begin{itemize}
\item[(a)] The number of elements in 
$$P(\mc{O}) := \{ w\lambda_{\mathcal{O}}\ |\  w\lambda_{\mathcal{O}}|_{\mf{l}}\ \text{is dominant and regular}\}$$ 
is equal to the number of conjugacy classes of $\overline{A}(\mc{O})$. In particular, there exists a unique $\lambda_1$ such that $(\lambda_1,\alpha^{\vee}) = 1$ for all simple roots in $\mf{l}$, and $\lambda_1 - \lambda$ is a sum of positive roots for all $\lambda \in P(\mc{O})$.
\item[(b)] The elements in $P(\mc{O}) = \{ \lambda_1, \lambda_2, \dots, \lambda_r\}$ can be arranged such that $|\lambda_i - \lambda_1| < |\lambda_{i+1} - \lambda_1|$ for all $i$.
\item[(c)] The representations $J(\lambda_1, \lambda_i)$ exhaust all special unipotent representations in $\Pi({}^L\mc{O})$. More precisely, suppose $\overline{A}(\mc{O}) \cong S_k$ for $2 \leq k \leq 5$, and $C_1 = \langle k \rangle > \dots > C_r = \langle 1^k \rangle$ is the total ordering of partitions of $k$ (such ordering exists since $k \leq 5$). Then $\pi_i \in \overline{A}(\mc{O})^{\vee}$
are parametrized by the partition $C_i$, with
$$X_{\mc{O},\pi_{i}} := J(\lambda_1, \lambda_i) \cong \mathrm{Ind}_L^G(V_i)$$
for some finite-dimensional representations $V_i$ of $L$ with highest weight $(\lambda_1 - \lambda_i)|_{\mf{l}}$.
In particular, when $i = 1$, then $\pi_1$ is the trivial representation of $\overline{A}(\mc{O})^{\wedge}$ and $V_1$ is the trivial representation of $L$.
\end{itemize}
\end{theorem}
\begin{proof}
The values of $\lambda_i$ in (a) and (b) are computed in Sections 4 -- 8. Assuming the results of (a) and (b) hold, then for each $\lambda_i$, let $I_i$ be as defined in Section 9 of \cite{BV1985} such that the character formula of 
$I_i$ is given by
\begin{equation} \label{eq-charform}
\sum_{w \in W(L)} \mathrm{sgn}(w) X(\lambda_1,w\lambda_i),
\end{equation}
so that $I_i \cong \mathrm{Ind}_L^G(V^L_{\lambda_1 - \lambda_i})$ as $K_{\bb{C}} \cong G$-modules. Obviously, we have $AV(I_i) = \overline{\mathrm{Ind}_{\mf{l}}^{\mf{g}}(0)} = \overline{\mc{O}}$, and its composition factors must consist of special unipotent representations attached to $\mc{O}$. Using the arguments in 9.11 - 9.21 of \cite{BV1985}, one can see $I_i = J(\lambda_1,\lambda_i)$ and (c) follows.
\end{proof}

\subsection{Vogan's Conjecture on Quantization}
As mentioned in the introduction, the special unipotent representations $\Pi({}^L\mathcal{O})$
are conjectured to be the `quantizations' of (local systems of) $\mathcal{O}$. More precisely, we
have the following conjecture by Vogan in the 1980s:
\begin{conjecture}[\cite{V3}, Conjecture 12.1] \label{conj:vogan}
Let $\mc{O}$ be a complex special nilpotent orbit, and
$\pi$ be an irreducible representation of $\overline{A}(\mc{O})$.
Then there exists an irreducible representation $W_{\psi}$
of $G_e$ that factors through $G_e/(G_e)_0 = A(\mc{O})$ such that:
$$X_{\mc{O},\pi}|_{K_\mathbb{C}} \cong R(\mc{O},\psi) \cong \mathrm{Ind}_{G_e}^G(W_{\psi}),$$
where $R(\mc{O},\psi)$ is the global section of 
the vector bundle $G \times_{G_e} W_{\psi} \to G/G_e 
\cong \mc{O}$. 
\end{conjecture}

One can use the results in \cite{S3} to see that
the irreducible $L$-modules $V_{i}$ in Theorem \ref{thm:BV9}(c) are all `lifts' 
of some $W_{\psi_i} \in A(\mc{O})^{\wedge}$ in Conjecture 
\ref{conj:vogan}. Namely, the restricted representation $V_{i}|_{L_e}$ factors through 
$L_e/(L_e)^0 \cong G_e/(G_e)^0$, and is isomorphic to $W_{\psi_i}$.
This gives an evidence on the validity of Conjecture \ref{conj:vogan}.
Indeed, for $\mathrm{triv} \in \overline{A}(\mc{O})^{\vee}$, one can easily verify the conjecture for 
Richardson orbits:
\begin{theorem} \label{thm:vogan}
Let $\mc{O}$ be an exceptional Richardson orbit. Then $X_{\mc{O},\mathrm{triv}}
\cong R(\mc{O})$, i.e. Conjecture \ref{conj:vogan}
holds for all exceptional Richardson orbits.
\end{theorem}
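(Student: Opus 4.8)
The plan is to split into two cases according to Proposition~\ref{prop:birational}. Suppose first that $\mc{O}$ is an exceptional Richardson orbit admitting a parabolic $P=LU$ for which $\mu\colon T^*(G/P)\to\overline{\mc{O}}$ is birational. Then Theorem~\ref{thm:main}, applied to the trivial representation of $\overline{A}(\mc{O})$, gives $X_{\mc{O},\mathrm{triv}}|_{K_{\bb{C}}}\cong Ind_L^G(\mathrm{triv})$, and $Ind_L^G(\mathrm{triv})$ is, by construction, the graded $G$-module $\bigoplus_{n\ge 0}H^0(G/P,S^nT_{G/P})=\Gamma\big(T^*(G/P),\mc{O}_{T^*(G/P)}\big)$ (the alternating sum in \eqref{eq:unip2} being its Euler characteristic). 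On the other hand, for trivial $\psi$ the bundle $G\times_{G^e}\psi$ is trivial, so $R(\mc{O})=\Gamma(\mc{O},\mc{O}_{\mc{O}})=\bb{C}[\mc{O}]$ with its natural $G$-action. Thus in this case the theorem reduces to the geometric identity $\Gamma(T^*(G/P),\mc{O})\cong\bb{C}[\mc{O}]$ of graded $G$-modules.

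To prove this, note that since $\mu$ is proper and birational and $T^*(G/P)$ is smooth, $\mu$ is a resolution of singularities of its closed image $\overline{\mc{O}}$, hence of the normalization $\widetilde{\overline{\mc{O}}}$, so $\mu_*\mc{O}_{T^*(G/P)}=\mc{O}_{\widetilde{\overline{\mc{O}}}}$ by Zariski's main theorem. Because $\widetilde{\overline{\mc{O}}}$ is a symplectic singularity it has rational singularities, so $R^{>0}\mu_*\mc{O}=0$; since $\widetilde{\overline{\mc{O}}}$ is affine this yields $\Gamma(T^*(G/P),\mc{O})=\bb{C}[\widetilde{\overline{\mc{O}}}]$ and $H^{>0}(T^*(G/P),\mc{O})=0$, the latter confirming that the Euler characteristic in \eqref{eq:unip2} really is this space of sections. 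Finally $\overline{\mc{O}}\setminus\mc{O}$ has codimension $\ge 2$, which holds for every nilpotent orbit closure, and normalization is finite, so every regular function on $\mc{O}$ extends to $\widetilde{\overline{\mc{O}}}$; hence $\bb{C}[\widetilde{\overline{\mc{O}}}]=\bb{C}[\mc{O}]=R(\mc{O})$, finishing this case.

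For the four exceptional Richardson orbits excluded by Proposition~\ref{prop:birational}, Theorem~\ref{thm:main} does not apply and the argument above breaks down: the moment map from any admissible $T^*(G/P)$ has generic degree $>1$, so its ring of global sections is strictly larger than $\bb{C}[\mc{O}]$. For these I would instead return to the Barbasch--Vogan formula \eqref{eq:unip}, compute $X_{\mc{O},\mathrm{triv}}$ explicitly as a $G$-module from the coefficients $a_{\mc{O},\mathrm{triv}}(\lambda)$ (which are controlled by Springer theory and the special $W$-representation attached to $\mc{O}$), and compare it, degree by degree, with the known $G$-module structure of $\bb{C}[\mc{O}]$ recorded in \cite{S4}. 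Since there are only four such orbits this is a finite verification; one should also check beforehand that each of them is special, so that $X_{\mc{O},\mathrm{triv}}$ is defined.

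The main obstacle is precisely this last case: the four non-birational orbits fall outside the clean resolution-of-singularities mechanism and demand a hands-on matching of the Barbasch--Vogan coefficients with the graded multiplicities in $\bb{C}[\mc{O}]$, which is delicate because neither side has an obvious uniform closed form. A secondary point requiring care in the main case is the vanishing $H^{>0}(T^*(G/P),\mc{O})=0$, i.e.\ that $Ind_L^G(\mathrm{triv})$ is a genuine $G$-module and not merely a virtual one: this is what rules out cancellation among the terms of the alternating sum in \eqref{eq:unip2}, so that the representation attached to $\mc{O}$ really coincides with the coordinate ring of the orbit.
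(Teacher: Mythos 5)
Your treatment of the birational case is essentially the paper's argument: the paper likewise observes that $\mu$ is the normalization of $\overline{\mc{O}}$, invokes standard algebraic geometry (citing \cite{JN2004}) to get $R(\mc{O}) \cong \bb{C}[T^*(G/P)]$, and uses McGovern's theorem to identify the latter with $Ind_L^G(\mathrm{triv}) \cong X_{\mc{O},\mathrm{triv}}$. Your extra detail on rational singularities, vanishing of higher cohomology, and the codimension-two extension argument is a correct filling-in of what that citation is meant to cover.

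The gap is in the four excluded orbits. Your plan --- compute $X_{\mc{O},\mathrm{triv}}$ from the coefficients $a_{\mc{O},\mathrm{triv}}(\lambda)$ and compare it ``degree by degree'' with the $G$-module structure of $\bb{C}[\mc{O}]$ --- is not actually a finite verification: both sides are infinite-dimensional graded $G$-modules, so a degree-by-degree match is an infinite check with no termination criterion, and you supply no mechanism for closing it. You correctly flag this as the main obstacle, but flagging it does not overcome it. The paper resolves it with a positivity argument. For these four orbits $\overline{A}(\mc{O}) = S_2$, and one shows $Ind_L^G(\mathrm{triv}) = X_{\mc{O},\mathrm{triv}} \oplus X_{\mc{O},\mathrm{sgn}}$. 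By \cite{V3} one has $X_{\mc{O},\mathrm{triv}} = R(\mc{O}) - Y_1$ and $X_{\mc{O},\mathrm{sgn}} = R(\mc{O},\mathrm{sgn}) - Y_2$ with $Y_1$, $Y_2$ genuine (not merely virtual) $K_{\bb{C}}$-modules, while Proposition 4.6.1 of \cite{B2017} gives $Ind_L^G(\mathrm{triv}) = R(\mc{O}) + R(\mc{O},\mathrm{sgn})$. Comparing the two expressions forces $Y_1 = Y_2 = 0$, hence $X_{\mc{O},\mathrm{triv}} \cong R(\mc{O})$ (and $X_{\mc{O},\mathrm{sgn}} \cong R(\mc{O},\mathrm{sgn})$) with no term-by-term computation. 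This cancellation trick is the idea your proposal is missing for the non-birational case.
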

\begin{proof}
When $\mc{O}$ is not equal to the four orbits in \eqref{eq-exception}, the map $\mu: T^*(G/P) \to \overline{\mc{O}}$ is the normalization of the orbit closure $\overline{\mc{O}}$. By standard arguments in algebraic geometry (see \cite{JN2004} for example), $R(\mc{O}) \cong \bb{C}[T^*(G/P)]$. By a result of \cite{McG1}, the latter is isomorphic to $\mathrm{Ind}_L^G(\mathrm{triv})$ as $G$-modules. This means $R(\mc{O}) \cong \mathrm{Ind}_L^G(\mathrm{triv}) \cong X_{\mc{O},\mathrm{triv}}$ and therefore Theorem \ref{thm:vogan} holds for these orbits. We will get the same conclusion for the four orbits in \eqref{eq-exception} in Section 9.
\end{proof}

\begin{remark} \label{rmk:vogan}
As mentioned in the introduction, Conjecture \ref{conj:vogan} is proved in full generality 
in \cite{LMM} and \cite{MM}. More explicitly, a definition of \emph{unipotent representations}
for $G$-equivariant covers of nilpotent orbits is given in \cite[Definition 1.4.1]{LMM}.
It is shown in \cite{LMM} (for classical groups) and \cite{MM} (for exceptional and spin groups) 
that all special unipotent representations $X_{\mathcal{O},\pi}$ in Definition \ref{def:unip}
are unipotent representations attached to some covers
of $\mathcal{O}$. Then by \cite[Section 6.7]{LMM}, Conjecture \ref{conj:vogan} holds for all
special unipotent representations.
\end{remark}

%
%

\subsection{The Lusztig-Vogan map}
We now give a precise description of the map \eqref{eq-lv} given by Theorem 8.2 of \cite{V1998}:
\begin{definition} \label{def-lv}
For each nilpotent element $\mathcal{O} \subset \mathfrak{g}$, and $W_{\sigma} \in (G_e)^{\vee}$ for any $e \in \mathcal{O}$, write
\begin{align} \label{eq:lusztigvogan}
R(\mathcal{O},\sigma) \cong \mathrm{Ind}_{G^e}^G(W_{\sigma}) \cong \sum_{\lambda \in \Lambda^+(G)} m_{\mathcal{O},\sigma}(\lambda) \mathrm{Ind}_{T}^{G}(e^{\lambda}),
\end{align}
where all but finitely many $m_{\mathcal{O},\sigma}(\lambda) \in \mathbb{Z}$ are zero. Then the {\bf Lusztig-Vogan map}
$$\Phi: \left\{(\mathcal{O}, \sigma)\ |\ \mathcal{O} \subset \mathcal{N}, \sigma \in (G_e)^{\vee}\ \text{for any}\ e \in \mathcal{O} \right\} \rightarrow \Lambda^+(G)$$
is defined by $\Phi(\mathcal{O},\sigma) = \lambda_{max},$
where $\lambda_{\max}$ is the largest dominant element in 
Equation \eqref{eq:lusztigvogan} such that $m_{\mathcal{O},\sigma}(\lambda_{\max}) \neq 0$.
\end{definition}
%

The paper \cite{W2} studies $\Phi$
for all classical nilpotent orbits $\mathcal{O}$, and all $(G_e)^{\vee}$ that factors through $(G_e/(G_e)^0)^{\vee} = A(\mc{O})^{\vee}$,
and consequently proved the
following conjecture of Achar and Sommers for classical groups:
\begin{conjecture}[\cite{AS}] \label{conj-AS}
Let $\mc{O}$ be a special nilpotent orbit. Consider the \textbf{special piece} $\mc{S}_{{}^L\mc{O}}$ (\cite{Lu2}) of the Lusztig-Spaltenstein dual ${}^L\mc{O}$ of $\mc{O}$:
$$S_{{}^L\mc{O}} := \{{}^L\mc{P} \subseteq \overline{{}^L\mc{O}} | {}^L\mc{P} \nsubseteq \overline{{}^L\mc{O}_{sp}}\ \text{for other special } {}^L\mc{O}_{sp} \subsetneq \overline{{}^L\mc{O}} \}.$$
For each ${}^L\mc{P} \in \mc{S}_{{}^L\mc{O}}$, let $H_{{}^L\mc{P}}$ be the semi-simple element of a Jacobson-Morozov triple
corresponding to ${}^L\mc{P}$. Then there exists $W_{\psi} \in A(\mc{O})^{\wedge}$ (which can be seen as a representation of
$G_e$ that factors through $A(\mc{O})$) such that 
$$\Phi(\mc{O},\psi) = H_{{}^L\mc{P}}.$$
\end{conjecture}
Let us describe how the above conjecture is proved in \cite{W2} for classical groups. In \cite[Definition 1.5]{W2}, the author defined the map
\begin{equation} \label{eq-psi}
\Psi: \{(\mc{O},\pi)\ |\ \mc{O}\ \text{special nilpotent orbit,}\ \pi \in \overline{A}(\mc{O})^{\wedge}\} \longrightarrow \Lambda^+(G),
\end{equation}
as follows: Suppose the character formula $X_{\mc{O},\pi}$ is given by
$$\sum_{w \in W} a_{\mc{O},\pi}(w)X(\lambda_{\mc{O}},w\lambda_{\mc{O}}).$$
On the level of Grothendieck groups, the restriction of $X_{\mc{O},\pi}$ into $K_{\mathbb{C}}$-modules is of the form
\begin{equation} \label{eq:unip2}
X_{\mc{O},\pi}|_{K_{\mathbb{C}}} = \sum_{w \in W} a_{\mc{O},\pi}(w)\mathrm{Ind}_T^G(\{\lambda_{\mc{O}} - w\lambda_{\mc{O}}\})
= \sum_{\lambda \in \Lambda^+(G)} b_{\mathcal{O},\pi}(\lambda)\mathrm{Ind}_T^G(e^{\lambda}),
\end{equation}
where $\{\lambda_{\mc{O}} - w\lambda_{\mc{O}}\}$ is as defined in \eqref{def-eta}, and $b_{\mathcal{O},\pi}(\lambda) \in \mathbb{Z}$. Then we define $\Psi(\mc{O},\pi) := \lambda_{\max}$, 
where $\lambda_{\max}$ is the largest dominant element in 
\eqref{eq:unip2} such that $b_{\mc{O},\pi}(\lambda_{\max})$ $\neq$ $0$.

\medskip
Since Conjecture \ref{conj:vogan} holds for all nilpotent orbits, 
one can relate the two maps $\Psi$ and $\Phi$ by Equations \eqref{eq:lusztigvogan}
and \eqref{eq:unip2}. As a consequence, one can verify Conjecture \ref{conj-AS} 
by computing $\Psi$. This is done for all classical special nilpotent orbits $\mathcal{O}$ 
and all $\pi \in \overline{A}(\mc{O})^{\vee}$ in \cite{W2}. In this manuscript, we apply the same strategy for exceptional Richardson orbits and obtain the following:
\begin{theorem} \label{thm:nonspecial}
Let $G$ be a complex exceptional Lie group of adjoint type, 
and $\mc{O}$ be a Richardson orbit. For every semisimple 
element $H_{{}^L\mathcal{P}}$ in a Jacobson-Morozov triple 
corresponding to ${}^L\mc{P} \in \mc{S}_{{}^L\mc{O}}$, there exists $\pi \in \overline{A}(\mc{O})^{\wedge}$
such that 
$$\Psi(\mc{O},\pi) = H_{{}^L\mc{P}}.$$
Consequently, Conjecture \ref{conj-AS} holds for all exceptional Richardson orbits.
\end{theorem}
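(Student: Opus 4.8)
The plan is to reduce the statement to a finite verification by combining Theorem~\ref{thm:main}, which makes $\Psi(\mc{O},-)$ completely explicit, with the known description of special pieces and of the Lusztig--Vogan map in exceptional type. First I would make $\Psi$ explicit. By Theorem~\ref{thm:main} and Proposition~\ref{prop:birational}, every exceptional Richardson orbit $\mc{O}$ --- with at most four exceptions --- admits a birational polarization and hence satisfies the character formula \eqref{eq:unip2}, so the dominant weights occurring with nonzero coefficient in \eqref{eq:unip} are exactly the dominant $W$-conjugates of $\lambda - w\lambda_{\pi}$, $w\in W(L)$, the largest of which is $\lambda - w_0\lambda_{\pi}$. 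Hence $\Psi(\mc{O},\pi)$ is well defined (this already removes the a priori ambiguity in the definition of $\Psi$) and equals the dominant conjugate $\overline{\lambda - w_0\lambda_{\pi}}$; letting $\pi$ run over $\overline{A}(\mc{O})^{\wedge}$ one obtains the image of $\Psi(\mc{O},-)$ as an explicit finite set, read off from the list of $V_{\pi}$ produced in the proof of Theorem~\ref{thm:main}. The four orbits not covered by Theorem~\ref{thm:main} I would handle separately: there $\overline{A}(\mc{O})$ and the special piece $\mc{S}_{d(\mc{O})}$ are small, and one still has a formula of the shape \eqref{eq:unip2} (e.g.\ after composing a polarization with a resolution), from which $\Psi$ is computed by hand.

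Next I would assemble the target. For each $\mc{O}^{\vee}\in\mc{S}_{d(\mc{O})}$ --- the special piece of $\overline{d(\mc{O})}$ being extracted from the Spaltenstein closure order on nilpotent orbits --- the semisimple element $h^{\vee}$ of a Jacobson--Morozov triple for $\mc{O}^{\vee}$ is, under the identification of Section~8 of \cite{V1998}, the dominant weight given by the weighted Dynkin diagram of $\mc{O}^{\vee}$ (Bala--Carter / Carter's tables). This yields, for each exceptional Richardson $\mc{O}$, a finite list $\{h^{\vee}\}$ to be matched against the image of $\Psi(\mc{O},-)$ from the previous step.

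The remaining step is the matching, and here I would follow the strategy of \cite{W4} through the Lusztig--Vogan map $\Gamma$. By the comparison with \cite{S4} recalled in the introduction, each pair $(\mc{O},V_{\pi})$ from Theorem~\ref{thm:main} lifts a pair $(\mc{O},\psi_{\pi})$ with $\psi_{\pi}=V_{\pi}|_{G^e}\in A(\mc{O})^{\wedge}$; a leading-term comparison (available because the moment map is birational) identifies $\overline{\lambda - w_0\lambda_{\pi}}$ with the $\lambda_{\max}$ of the global sections $R(\mc{O},\psi_{\pi})$, which by the characterization of $\Gamma$ in Section~8 of \cite{V1998} is $\Gamma(\mc{O},\psi_{\pi})$ --- the exceptional analogue of the $\Psi$--$\Gamma$ relation of \cite{W4}. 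Granting $\Psi(\mc{O},\pi)=\Gamma(\mc{O},\psi_{\pi})$, it then suffices to check, orbit by orbit, that the $\psi_{\pi}$ arising from $\pi\in\overline{A}(\mc{O})^{\wedge}$ are precisely the irreducible representations of $A(\mc{O})$ that the Achar--Sommers recipe pairs via $\Gamma$ with the elements of $\mc{S}_{d(\mc{O})}$, so that $\{\Gamma(\mc{O},\psi_{\pi})\}=\{h^{\vee}\}$; this I would verify from Sommers' tables in \cite{S4} together with the explicit description of $\Gamma$ in exceptional type.

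The conceptual content is concentrated in this last matching step, and the genuine obstacle is the following: for several exceptional orbits $\overline{A}(\mc{O})$ is a \emph{proper} quotient of $A(\mc{O})$, so only some $\psi\in A(\mc{O})^{\wedge}$ occur as $\psi_{\pi}$, and one must confirm that every $h^{\vee}$ coming from the special piece $\mc{S}_{d(\mc{O})}$ is nonetheless realized --- i.e.\ is $\Gamma$-paired with a $\psi$ that factors through $\overline{A}(\mc{O})$. This is the one place where the statement could fail and must be checked rather than deduced; the rest is administrative bookkeeping (matching Theorem~\ref{thm:main}'s parameter $\lambda_{\pi}$ with the labelling of $A(\mc{O})^{\wedge}$ in \cite{S4}, and a consistent normalization of the weighted Dynkin diagrams). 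Modulo these, the argument is a finite cross-check over $G_2$, $F_4$, $E_6$, $E_7$, $E_8$.
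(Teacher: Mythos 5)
Your first step---making $\Psi(\mc{O},\pi)$ explicit as the dominant conjugate of $\lambda-w_0\lambda_{\pi}$ via Theorem \ref{thm:main}, and treating the four non-birational orbits separately by reducing to classical data---is exactly what the paper does. The problem is your final matching step. The paper never routes the comparison through the Lusztig--Vogan map $\Gamma$: it simply tabulates the dominant conjugates of $\lambda_1-w_0\lambda_i$ as weighted Dynkin diagrams (the tables in Section 4, and the hand computations in Sections 3.2.1--3.2.4 for the four special orbits, where $\mc{S}_{d(\mc{O})}$ is a singleton), lists the orbits $\mc{O}^{\vee}\in\mc{S}_{d(\mc{O})}$ with their Dynkin elements $h^{\vee}$, and observes that the two finite lists coincide. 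That is the entire proof; no input about $\Gamma$ is used.

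Your proposed mechanism for the matching is not available and is essentially circular. The identification $\Psi(\mc{O},\pi)=\Gamma(\mc{O},\psi_{\pi})$ requires $X_{\mc{O},\pi}\cong R(\mc{O},\psi_{\pi})$ (Conjecture \ref{conj:vogan}), which in this paper is established only for $\pi$ trivial (Theorem \ref{thm:vogan}) and for the four special orbits---and its proof for those cases already uses the explicit character formulas you are trying to establish. Worse, your "leading-term comparison" would then need the Achar--Sommers description of the image of $\Gamma$ over special pieces \emph{in exceptional type}, which is precisely what is not known: the conjecture of \cite{AS} is stated and proved (in \cite{W4}) for classical orbits, and Theorem \ref{thm:nonspecial} is offered by the paper as \emph{evidence toward} its exceptional analogue, not as a consequence of it. So the one step you identify as carrying "the conceptual content" cannot be carried out as described. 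The fix is to drop $\Gamma$ entirely and do the direct finite cross-check of the $\Psi$-values against the weighted Dynkin diagrams of $\mc{S}_{d(\mc{O})}$, which is what the tables accomplish.
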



%


\section{Proof of Theorem \ref{thm:nonspecial} - General Case}
In Sections 4 -- 8, we write down the details for Theorem \ref{thm:BV9} and Theorem \ref{thm:nonspecial} for all  Richardson orbits other than the four orbits in
\eqref{eq-exception}. Namely, for each exceptional Richardson orbit $\mc{O}$, we list
\begin{itemize}
\item the Levi subalgebra $\mf{l}$ where $\mc{O}$ is induced from, by specifying a sub-diagram of the Dynkin diagram of $\mf{g}$;
\item all irreducible representations $\pi_i$ of $\overline{A}(\mc{O}) \cong S_k$ ($k = 2,3,4,5$) in terms of partitions of $k$;
\item the values of $\lambda_i$ appearing in Theorem \ref{thm:BV9}(a)--(b).
\end{itemize}
This verifies Theorem \ref{thm:BV9}. 

\medskip
We now turn our attention to Theorem \ref{thm:nonspecial}. By Equation \eqref{eq-charform} and \eqref{eq:unip2}, the image of $\Psi$ can be easily computed as
$$\Psi(\mc{O},\pi_{i}) = \{\lambda_1 - w_L(\lambda_{i})\}$$ 
for all $\pi_i \in \overline{A}(\mc{O})^{\vee}$, where $w_L$ is the longest element in the Weyl group $W(L)$. The values of $\Psi(\mc{O},\pi_{i})$ are recorded on the second last column of the tables in terms of the weighted Dynkin diagram of ${}^L\mf{g}$. And the last column records the orbit ${}^L\mc{P}$ whose Dynkin element $H_{{}^L\mc{P}}$ is given by the previous column, which verifies Theorem \ref{thm:nonspecial} for all but four exceptional Richardson orbits given in \eqref{eq-exception}.\\

\section{$G_2$ Orbits}
The exceptional group $G_2$ has three Richardson orbits.  Fix the Dynkin diagram of $\mf{g}$ by: 

\end{center}}

\section{Proof of Theorem \ref{thm:nonspecial} - the four exceptional cases}
We now finish the proof of Theorem \ref{thm:nonspecial} for 
$\mc{O}$ equal to the four orbits in \eqref{eq-exception}. Note that all these 
orbits have Lusztig's quotient $\overline{A}(\mc{O})$ $\cong$ $S_2$, and the special piece
$\mc{S}_{{}^L\mc{O}} = \{{}^L\mc{O}\}$ only contains one element. Therefore, one only 
needs to find one $\pi \in \overline{A}(\mc{O})^{\vee}$ such that $\Psi(\mc{O},\pi) = H_{{}^L\mc{O}}$
in order to verify Theorem \ref{thm:nonspecial}.

Note that the orbits $A_4+A_1$ in $E_7$ and $E_6(a_1)+A_1$ 
in $E_8$ are called {\it exceptional} in Section 4 of \cite{BV1985}.\\

\subsection{$\mc{O} = A_4+A_1$ in $E_7$}
By checking the tables of \cite{AL} directly
(the calculations for other orbits can be found in the Appendix), the two left cell representations attached to $\mc{O}$ are equal to $J_{D_6 + A_1}^{E_7}(\sigma_1)$, $J_{D_6+A_1}^{E_7}(\sigma_2)$, where $\sigma_1$, $\sigma_2$ are the two left cell representations attached to $\mc{O}'$ $=$ $[332211] + [11]$ in $D_6 + A_1$. Using Proposition 6.6 of \cite{BV1985}, the character formulas for $X_{\mc{O},\pi}$ can be derived from that of $\mc{O}'$ in
$$ D_6 + A_1 = \begin{tikzpicture} \draw[fill=black] (0,0)       circle [radius=.13] node [above] {\scriptsize{$(1,-1,0,0,0,0,0,0)$}} -- (1.4,0)       circle [radius=.13] node [below] {\scriptsize{$(0,1,-1,0,0,0,0,0)$}} -- (2.8,0)       circle [radius=.13] node [above] {\scriptsize{$(0,0,1,-1,0,0,0,0)$}} --  (4.2,0)       circle [radius=.13] node [below] {\scriptsize{$(0,0,0,1,-1,0,0,0)$}} --
(4.2,0)--++ (90:0.8)    circle [radius=.13] node [above] {\scriptsize{$(\frac{-1}{2},\frac{-1}{2},\frac{-1}{2},\frac{-1}{2},\frac{1}{2},\frac{1}{2},\frac{1}{2},\frac{1}{2})$}}
(4.2,0)--++ (0:1.4)    circle [radius=.13] node [above] {\scriptsize{$(0,0,0,0,1,-1,0,0)$}}   (7.5,0)       circle [radius=.13] node [below] {\scriptsize{$(0,0,0,0,0,0,1,-1)$}}       ; \end{tikzpicture}.$$
Using Theorem 3.4 in \cite{W2}, $\Psi(\mc{O}', \langle 2 \rangle) = \begin{tikzpicture} \draw[fill=black] (0,0)       circle [radius=.05] node [below] {0} -- (0.3,0)       circle [radius=.05] node [below] {2} -- (0.6,0)       circle [radius=.05] node [below] {0} -- (0.9,0)       circle [radius=.05] node [below] {2} -- (0.9,0) --++ (90:0.25)       circle [radius=.05] node [above] {0} (0.9,0) --++ (0:0.3)       circle [radius=.05] node [below] {0}  (1.75,0)       circle [radius=.05] node [below] {2}       ; \end{tikzpicture}\ ;\ \Psi(\mc{O}', \langle 1^2 \rangle) = \begin{tikzpicture} \draw[fill=black] (0,0)       circle [radius=.05] node [below] {0} -- (0.3,0)       circle [radius=.05] node [below] {2} -- (0.6,0)       circle [radius=.05] node [below] {1} -- (0.9,0)       circle [radius=.05] node [below] {0} -- (0.9,0) --++ (90:0.25)       circle [radius=.05] node [above] {1} (0.9,0) --++ (0:0.3)       circle [radius=.05] node [below] {1}  (1.75,0)       circle [radius=.05] node [below] {2}       ; \end{tikzpicture}$. Therefore, one can compute that
$$\Psi(A_4+A_1,\langle 2 \rangle) = \{(1,1,-1,-1,-3,-3,4,2)\} = \left(\frac{1}{2}, \frac{1}{2}, \frac{-1}{2},\frac{-1}{2},\frac{-3}{2},\frac{-3}{2},\frac{-5}{2},\frac{11}{2}\right),$$
$$\Psi(A_4+A_1,\langle 1^2 \rangle) = \{(1,1,-1,-2,-2,-3,4,2)\} = \left(\frac{1}{2}, \frac{-1}{2}, \frac{-1}{2},\frac{-1}{2},\frac{-1}{2},\frac{-3}{2},\frac{-5}{2},\frac{11}{2}\right).$$
In terms of Dynkin diagram, we have
$$\Psi(A_4+A_1, \langle 2 \rangle) = \begin{tikzpicture} \draw[fill=black] (0,0)       circle [radius=.05] node [below] {0} -- (0.3,0)       circle [radius=.05] node [below] {1} -- (0.6,0)       circle [radius=.05] node [below] {0} -- (0.9,0)       circle [radius=.05] node [below] {1} -- (0.9,0) --++ (90:0.25)       circle [radius=.05] node [above] {0} (0.9,0) --++ (0:0.3)       circle [radius=.05] node [below] {0} -- (1.5,0)       circle [radius=.05] node [below] {1}       ; \end{tikzpicture}\ ;\ \Psi(A_4+A_1, \langle 1^2 \rangle) = \begin{tikzpicture} \draw[fill=black] (0,0)       circle [radius=.05] node [below] {1} -- (0.3,0)       circle [radius=.05] node [below] {0} -- (0.6,0)       circle [radius=.05] node [below] {0} -- (0.9,0)       circle [radius=.05] node [below] {0} -- (0.9,0) --++ (90:0.25)       circle [radius=.05] node [above] {1} (0.9,0) --++ (0:0.3)       circle [radius=.05] node [below] {1} -- (1.5,0)       circle [radius=.05] node [below] {1}       ; \end{tikzpicture}.$$
Note that we have $\Psi(A_4+A_1, \langle 2 \rangle) = H_{A_4+A_1} = H_{{}^L(A_4+A_1)}$ and Theorem \ref{thm:nonspecial} holds.

\subsection{$\mc{O} = D_5(a_1)$ in $E_7$} \label{subsec:9.2}
As in the above subsection, one can derive the character formulas of $X_{\mc{O},\pi}$ from that of $\mc{O}'$ $=$ $[332211]$ in $D_6$. More precisely, we have
$\Psi(\mc{O}', \langle 2 \rangle) = \begin{tikzpicture} \draw[fill=black] (0,0)       circle [radius=.05] node [below] {0} -- (0.3,0)       circle [radius=.05] node [below] {2} -- (0.6,0)       circle [radius=.05] node [below] {0} -- (0.9,0)       circle [radius=.05] node [below] {2} -- (0.9,0) --++ (90:0.25)       circle [radius=.05] node [above] {0} (0.9,0) --++ (0:0.3)       circle [radius=.05] node [below] {0}  (1.75,0)       circle [radius=.05] node [below] {0}       ; \end{tikzpicture}\ ;\ \Psi(\mc{O}', \langle 1^2 \rangle) = \begin{tikzpicture} \draw[fill=black] (0,0)       circle [radius=.05] node [below] {0} -- (0.3,0)       circle [radius=.05] node [below] {2} -- (0.6,0)       circle [radius=.05] node [below] {1} -- (0.9,0)       circle [radius=.05] node [below] {0} -- (0.9,0) --++ (90:0.25)       circle [radius=.05] node [above] {1} (0.9,0) --++ (0:0.3)       circle [radius=.05] node [below] {1}  (1.75,0)       circle [radius=.05] node [below] {0}       ; \end{tikzpicture}$. Therefore, one can compute that
$$\Psi(D_5(a_1),\langle 2 \rangle) = \{(1,1,-1,-1,-3,-3,3,3)\} = (1,1,-1,-1,-1,-1,-3,5),$$
$$\Psi(D_5(a_1),\langle 1^2 \rangle) = \{(1,1,-1,-2,-2,-3,3,3)\} = (1,0,0,-1,-1,-1,-3,5).$$
In terms of Dynkin diagram, we have
$$\Psi(D_5(a_1), \langle 2 \rangle) = \begin{tikzpicture} \draw[fill=black] (0,0)       circle [radius=.05] node [below] {0} -- (0.3,0)       circle [radius=.05] node [below] {2} -- (0.6,0)       circle [radius=.05] node [below] {0} -- (0.9,0)       circle [radius=.05] node [below] {0} -- (0.9,0) --++ (90:0.25)       circle [radius=.05] node [above] {0} (0.9,0) --++ (0:0.3)       circle [radius=.05] node [below] {0} -- (1.5,0)       circle [radius=.05] node [below] {2}       ; \end{tikzpicture}\ ;\ \Psi(D_5(a_1), \langle 1^2 \rangle) = \begin{tikzpicture} \draw[fill=black] (0,0)       circle [radius=.05] node [below] {1} -- (0.3,0)       circle [radius=.05] node [below] {0} -- (0.6,0)       circle [radius=.05] node [below] {1} -- (0.9,0)       circle [radius=.05] node [below] {0} -- (0.9,0) --++ (90:0.25)       circle [radius=.05] node [above] {0} (0.9,0) --++ (0:0.3)       circle [radius=.05] node [below] {0} -- (1.5,0)       circle [radius=.05] node [below] {2}       ; \end{tikzpicture}.$$
Note that $\Psi(D_5(a_1), \langle 2 \rangle) = H_{A_4} = H_{{}^L(D_5(a_1))}$, and Theorem \ref{thm:nonspecial} holds.

\subsection{$\mc{O} = E_6(a_1)+A_1$ in $E_8$}
As before, the character formulas for $X_{\mc{O},\pi}$ can be derived from that of $\mc{O}'$ $=$ $D_5(a_1)$ $+$ $[11]$ in $E_7+A_1$, which, by last subsection, can be derived from that of $[332211]$ $+$ $[11]$ in $D_6 + A_1$. Using the same argument as above, we have
$$\Psi(E_6(a_1)+A_1, \langle 2 \rangle) = \begin{tikzpicture} \draw[fill=black] (-0.3,0)       circle [radius=.05] node [below] {1} -- (0,0)       circle [radius=.05] node [below] {0} -- (0.3,0)       circle [radius=.05] node [below] {1} -- (0.6,0)       circle [radius=.05] node [below] {0} -- (0.9,0)       circle [radius=.05] node [below] {0} -- (0.9,0) --++ (90:0.25)       circle [radius=.05] node [above] {0} (0.9,0) --++ (0:0.3)       circle [radius=.05] node [below] {0} -- (1.5,0)       circle [radius=.05] node [below] {1}       ; \end{tikzpicture}\ ;\ \Psi(E_6(a_1)+A_1, \langle 1^2 \rangle) = \begin{tikzpicture}  \draw[fill=black] (-0.3,0)       circle [radius=.05] node [below] {1} --  (0,0)       circle [radius=.05] node [below] {1} -- (0.3,0)       circle [radius=.05] node [below] {0} -- (0.6,0)       circle [radius=.05] node [below] {0} -- (0.9,0)       circle [radius=.05] node [below] {0} -- (0.9,0) --++ (90:0.25)       circle [radius=.05] node [above] {1} (0.9,0) --++ (0:0.3)       circle [radius=.05] node [below] {0} -- (1.5,0)       circle [radius=.05] node [below] {0}       ; \end{tikzpicture}.$$

We have $\Psi(E_6(a_1)+A_1, \langle 2 \rangle) = H_{A_4+A_1} = H_{{}^L(E_6(a_1)+A_1)}$ and Theorem \ref{thm:nonspecial} follows.

\subsection{$\mc{O} = E_7(a_3)$ in $E_8$} \label{subsec:9.4}
Finally, we can derive the character formula of $X_{\mc{O},\pi}$ from that of $\mc{O}'$ $=$ $[332211]$ in $D_6$. As in the previous subsections, we have
$$\Psi(E_7(a_3), \langle 2 \rangle) = \begin{tikzpicture} \draw[fill=black] (-0.3,0)       circle [radius=.05] node [below] {2} -- (0,0)       circle [radius=.05] node [below] {0} -- (0.3,0)       circle [radius=.05] node [below] {0} -- (0.6,0)       circle [radius=.05] node [below] {0} -- (0.9,0)       circle [radius=.05] node [below] {0} -- (0.9,0) --++ (90:0.25)       circle [radius=.05] node [above] {0} (0.9,0) --++ (0:0.3)       circle [radius=.05] node [below] {0} -- (1.5,0)       circle [radius=.05] node [below] {2}       ; \end{tikzpicture}\ ;\ \Psi(E_7(a_3), \langle 1^2 \rangle) = \begin{tikzpicture}  \draw[fill=black] (-0.3,0)       circle [radius=.05] node [below] {2} --  (0,0)       circle [radius=.05] node [below] {0} -- (0.3,0)       circle [radius=.05] node [below] {0} -- (0.6,0)       circle [radius=.05] node [below] {0} -- (0.9,0)       circle [radius=.05] node [below] {0} -- (0.9,0) --++ (90:0.25)       circle [radius=.05] node [above] {0} (0.9,0) --++ (0:0.3)       circle [radius=.05] node [below] {1} -- (1.5,0)       circle [radius=.05] node [below] {0}       ; \end{tikzpicture}.$$
We have $\Psi(E_7(a_3), \langle 2 \rangle) = H_{A_4} = H_{{}^L(E_7(A_3))}$, and Theorem \ref{thm:nonspecial} holds.\\

This finishes the proof of Theorem \ref{thm:nonspecial}. \qed

\begin{remark} \label{rmk:voganconj4}
For the four orbits above, the sum of the two special 
unipotent representations attached to $A_4+A_1$, $D_5(a_1)$ 
in $G = E_7$, and $E_6(a_1)+A_1$, $E_7(a_3)$ in $G = E_8$ 
are isomorphic to $\mathrm{Ind}_{L}^{G}(\mathrm{triv})$, where the 
semisimple part of $L$ is of type $A_4+A_1$ for the orbits 
$A_4+A_1$, $E_6(a_1)+A_1$, and of type $A_4$ for $D_5(a_1)$, 
$E_7(a_3)$. In fact, the orbit $[332211]$ in $D_6$ that shows 
up in all the above examples is also a Richardson orbit, induced 
from $GL(5) \times SO(2)$. The sum of the two special 
unipotent representations attached to $[332211]$ is 
isomorphic to $\mathrm{Ind}_{GL(5) \times SO(2)}^{SO(12)}(\mathrm{triv})$.\\

There are two consequences of this observation:
\begin{itemize}
\item[(a)] Non-birationality of $\mu$: We give an 
alternative proof of Proposition \ref{prop:birational} 
for these four orbits -- For example, suppose on the 
contrary that $\mu: T^*(G/P) \to \overline{D_5(a_1)}$ 
is birational with the semisimple part of $L$ is of 
Type $A_4$, then $R(D_5(a_1))$ $\cong$ $\mathbb{C}[T^*(G/P)]$ $\cong 
\mathrm{Ind}_{L}^{E_7}(\mathrm{triv})$ $\cong$ $X_{D_5(a_1),\mathrm{triv}} 
\oplus X_{D_5(a_1),\mathrm{sgn}}$.

However, the multiplicity of $\overline{D_5(a_1)}$ 
in $R(D_5(a_1))$ is one, while the multiplicity 
in $X_{D_5(a_1),\mathrm{triv}} 
\oplus X_{D_5(a_1),\mathrm{sgn}}$ is greater than one.
\item[(b)] An alternative proof of Conjecture \ref{conj:vogan} for these orbits: By \cite{V3}, 
we have $X_{\mc{O},\mathrm{triv}} = R(\mc{O}) - 
Y_1$ and $X_{\mc{O},\mathrm{sgn}} = R(\mc{O},\mathrm{sgn}) 
- Y_2$ for some genuine $K_{\bb{C}}$-modules $Y_1$, $Y_2$. 
Therefore,
\[
\mathrm{Ind}_{L}^{G}(\mathrm{triv}) = X_{\mc{O},\mathrm{triv}} \oplus X_{\mc{O},\mathrm{sgn}} = R(\mc{O}) + R(\mc{O},\mathrm{sgn}) - Y_1 - Y_2.
\]
By Proposition 4.6.1 of \cite{B2017}, the left hand side is equal to $R(\mc{O}) + R(\mc{O},\mathrm{sgn})$. So $Y_1 = Y_2 = 0$ and hence we have $X_{\mc{O},\mathrm{triv}} \cong R(\mc{O})$ and $X_{\mc{O},\mathrm{sgn}} \cong R(\mc{O},\mathrm{sgn})$.
\end{itemize}
\end{remark}
The method we used in (b) in the above remark 
can also be applied to verify Conjecture \ref{conj:vogan} 
for other Richardson orbits with non-birational 
moment maps. For example, let $\mc{O} = F_4(a_3)$ 
in $F_4$. If we take the the parabolic subgroup 
whose Levi is of type $A_2 + \widetilde{A_1}$, 
then one can show that $X_{\mc{O},\langle31\rangle} 
\cong R(\mc{O},\langle31\rangle)$. On the other hand, 
if we take the the parabolic subgroup whose Levi 
is of type $B_2$, then one can show that $X_{\mc{O},
\langle22\rangle} \cong R(\mc{O},\langle22\rangle)$.

\bigskip
\noindent {\bf Funding}:\ The author is supported by 
Shenzhen Science and Technology Innovation Committee grant
(no. 20220818094918001).

\appendix

\section{Special Unipotent Representations for Non-even ${}^L\mc{O}$}
Based on the results of \cite{BV1985} on the structure of $\Pi({}^L\mc{O})$
for even ${}^L\mc{O}$, we give a sketch proof of Theorem \ref{thm-BV} when the special orbit ${}^L\mc{O}$ is not even
(or equivalently $\lambda_{\mc{O}}$ is not integral). 
Although it is known by the experts that the results in \cite{BV1985} carry over to all $\lambda_{\mc{O}}$ 
regardless of its integrality, we do not find a proof of such result in the literature.

\medskip
\noindent {\it Sketch Proof of Theorem \ref{thm-BV} for non-even ${}^L\mc{O}$:}
\begin{itemize}
\item[(I)] Let $\lambda := \lambda_{\mc{O}}$. Consider the Lie subalgebra $\mf{g}'$ 
of $\mf{g}$ with roots $\alpha$ satisfying $\langle 
\alpha^{\vee}, \lambda \rangle \in \bb{Z}$. 
It turns out that there is a choice of simple roots of $\mf{g}'$
such that their inner products with $\lambda$ 
is equal to either $0$ or $1$.

\item[(II)] Upon restricting to $\mf{g}'$, the result in \cite{BV1985} applies, 
and we can obtain {\it integral} special unipotent representations with 
infinitesimal character $\lambda = \frac{{}^LH'}{2}$
for some {\it even} orbit ${}^L\mc{O}' \in {}^L\mf{g}'$.

\item[(III)] More explicitly, let $\mc{O}' \in \mf{g}'$ be the Lusztig-Spaltenstein
dual of ${}^L\mc{O}'$, and $V^L(\mc{O}')$ be the left cell representation
of $W'$ denoted by $V^L(w_0w_{\mc{O}'})$ in \cite{BV1985}. By
Chapter 4 of \cite{Lu1}, the irreducible representations in 
$V^L(\mc{O}')$ are parametrized by $\{\sigma_x'\ |\ x \in [\overline{A}(\mc{O}')]\}$ 
(i.e. partitions $[n_1^{m_1} n_2^{m_2} \dots]$ of $k$ if $\overline{A}(\mc{O}') \cong S_k$).

By Theorem III of \cite{BV1985}, the unipotent 
representations $\Pi({}^L\mc{O}')$ are parametrized by $\pi \in \overline{A}(\mc{O}')^{\vee}$, and their character formulas
are given by
$$J_{G'}(\lambda,w_{\pi}\lambda) 
= \frac{1}{|\overline{A}(\mc{O}')|}\sum_{x \in [\overline{A}(\mc{O}')]} \mathrm{tr}_{\pi}(x) |x| \sum_{w' \in W'} \mathrm{tr}_{\sigma_x'}(w')X_{G'}(\lambda,w'\lambda).$$
Under the above identification of elements in $V^L(\mc{O}')$, the
representation $\sigma_{sp}' := \sigma_{[1^k]}'$ is special, and is the Springer
representation of $\mc{O}'$. All other irreducible representations $J_{G'}(\lambda,w\lambda)$
with our choice of $\lambda$ have character formulas involving special representations $\tau'_{sp} \in (W')^{\vee}$, 
where $\mathrm{deg}(\tau'_{sp}) < \mathrm{deg}(\sigma_{sp}')$.

\item[(IV)] By a version of the Kazhdan-Lusztig conjecture (c.f. \cite[Theorem 4.2]{B1989}), 
\begin{equation} \label{eq:charxg}
J(\lambda,w_{\pi}\lambda) 
= \frac{1}{|\overline{A}(\mc{O}')|}\sum_{x \in [\overline{A}(\mc{O}')]} \mathrm{tr}_{\pi}(x) |x| \sum_{w' \in W'} \mathrm{tr}_{\sigma_x'}(w')X(\lambda,w'\lambda)
\end{equation}
are character formulas of irreducible representations of $G$ for all $\pi \in \overline{A}(\mc{O}')^{\vee}$. 
The same result goes for character formulas of other irreducible representations $J_{G'}(\lambda,w\lambda)$
that involves $\tau'_{sp} \in (W')^{\vee}$. 

\item[(V)] Let $J_{W'}^{W}(\sigma_{sp}')$ is the $J$-truncated induction defined in \cite[Section 4.3]{Lu1}.
It contains a unique special representation $\sigma_{sp}$
with $\mathrm{deg}(\sigma_{sp}) = \mathrm{deg}(\sigma_{sp}')$ (and similarly we have 
$\tau_{sp} \in J_{W'}^{W}(\tau'_{sp})$ for all special representations $\tau'_{sp} \in (W')^{\vee}$). By studying the $W \times W$-module 
$\mathbb{C}[W]$  (c.f. \cite[Proposition 6.6]{BV1985}), these character 
formulas in $G$ contain expressions involving $\sigma_{sp}$ (or
$\tau_{sp}$), and their associated varieties can be determined
by the Springer correspondence of $\sigma_{sp}$ (or $\tau_{sp}$).
Since $\mathrm{deg}(\tau_{sp}) < \mathrm{deg}(\sigma_{sp})$,
the irreducible representations $J(\lambda,w_{\pi}\lambda)$
in \eqref{eq:charxg} are the ones with smallest possible associated variety
for our choice of $\lambda$.

\item[(VI)] We verify the following statements in the next couple of sections:
\begin{itemize}
\item[$\bullet$] For all orbits we are studying, 
$\overline{A}(\mc{O}) \cong \overline{A}(\mc{O}')$,
and $\sigma_{sp} = J_{W'}^W(\sigma_{sp}')$ is non-zero and irreducible. 
More precisely, $\sigma_{sp}$ is the Springer representation 
attached to $\mc{O}$, the Lusztig-Spaltenstein
dual of ${}^L\mc{O}$.
\item[$\bullet$] If $\mc{O}$ is not equal to 
the three exceptional orbits ($A_4 + A_1$ in $E_7$, $A_4+A_1$ and $E_6(a_1) + A_1$
in $E_8$) listed in \cite[Definition 4.5]{BV1985}, $J_{W'}^{W}(\sigma_x')$ is non-zero and irreducible 
for all $\sigma_x' \in V^L(\mc{O}')$. Since $V^L(\mc{O}) = 
J_{W'}^W(V^L(\mc{O}'))$ by \cite[Proposition 3.18]{BV1983}, 
the elements of $V^L(\mc{O})$ can be parametrized by
$$x \in [A(\mc{O})] \cong [A(\mc{O}')] \longleftrightarrow  \sigma_x := J_{W'}^{W}(\sigma_x') \in V^L(\mc{O}).$$
This identification of elements in $V^L(\mc{O})$
matches with that of \cite{Lu1}.
\end{itemize}
The first point guarantees that all $J(\lambda,w_{\pi}\lambda)$
have associated variety equal to $\overline{\mc{O}}$ 
(see also Theorem 5.2 of \cite{McG2}), and they have
the same cardinality as the number of 
irreducible representations of $\overline{A}(\mc{O})$, i.e. all $J(\lambda,w_{\pi}\lambda) \in \Pi({}^L\mc{O})$
and $|\Pi({}^L\mc{O})| = |\overline{A}(\mc{O})^{\vee}|$.
The second point and \cite[Proposition 6.6]{BV1985} 
reformulate Equation \eqref{eq:charxg} into
$$J(\lambda,w_{\pi}\lambda) 
= \frac{1}{|\overline{A}(\mc{O})|}\sum_{x \in [\overline{A}(\mc{O})]} \mathrm{tr}_{\pi}(x) |x| \sum_{w \in W} \mathrm{tr}_{\sigma_x}(w)X(\lambda,w\lambda).$$
In other words, the character formula of $J(\lambda,w_{\pi}\lambda)$
is given precisely by Theorem III of \cite{BV1985}, and the result follows. 
\end{itemize}

\subsection{Classical Lie algebras}
We apply the strategy in the previous section to classify all classical special unipotent representations. Recall from \cite{CM}
that each classical nilpotent orbit ${}^L\mc{O} \subset {}^L\mf{g}$ can be uniquely characterized by
a partition, except in Type $D$ that there are two orbits corresponding to a single
\emph{very even} partition of the form
$[2a \geq 2a \geq 2b \geq 2b \geq \cdots \geq 2c \geq 2c]$ (the \emph{very even orbits}). Since all very even orbits are even, 
the results in \cite{BV1985} can be applied directly. 

For convenience, we denote all non-very even ${}^L\mc{O}$ by their corresponding partition.
\begin{proposition}[\cite{CM}, Proposition 6.3.7] \label{prop:class} \mbox{}\\
\begin{itemize}
\item {\bf ${}^L\mf{g}$ of Type $B$:} Let ${}^L\mc{O} = [r_{2k} \geq r_{2k-1} \geq \dots \geq r_0]$ be a nilpotent orbit of Type $B$, i.e. 
$|\{i\ |\ r_i = \alpha\}|$ is even for all {\bf even} integers $\alpha$.
Then ${}^L\mc{O}$ is a special orbit if and only if its dual partition defines a nilpotent orbit of Type $B$ or, equivalently,
$r_{2l+1} + r_{2l}$ is even for all $l \leq k-1$.
\item {\bf ${}^L\mf{g}$ of Type $C$:}\ \  Let ${}^L\mc{O} = [r_{2k+1} \geq r_{2k} \geq \dots \geq r_1]$ be a nilpotent orbit of Type $C$, i.e. $|\{i\ |\ r_i = \alpha\}|$ is even for all {\bf odd} integers $\alpha$.
Then ${}^L\mc{O}$ is a special orbit if and only if its dual partition defines a nilpotent orbit of Type $C$ or, equivalently,
$r_{2l+1} + r_{2l}$ is even for all $l \leq k$.
\item {\bf ${}^L\mf{g}$ of Type $D$:}\ \  Let ${}^L\mc{O} = [r_{2k+1} \geq r_{2k} \geq \dots \geq r_0]$ be a non-very even orbit of Type $D$, i.e. $|\{i\ |\ r_i = \alpha\}|$ is even for all {\bf even} integers $\alpha$. Then ${}^L\mc{O}$ is a special orbit if and only if its dual partition defines a nilpotent orbit of Type $C$ or, equivalently,
$r_{2l+1} + r_{2l}$ is even for all $l  \leq k$.
\end{itemize}
\end{proposition}

For any special orbit ${}^L\mc{O}$, the Lusztig's quotient $\overline{A}({}^L\mc{O})$ is given by:
\begin{proposition}[\cite{W2}, Proposition 2.2] \mbox{}\\
\begin{itemize}
\item {\bf ${}^L\mf{g}$ of Type $B$:}\ \ Let ${}^L\mc{O} = [r_{2k} \geq r_{2k-1} \geq \dots \geq r_0]$ be a special orbit. Separate all {\bf even} rows (which must be of the form $r_{2l+1} = r_{2l} = \alpha$), along with odd row pairs of the form $r_{2l} = r_{2l-1} = \beta$ and get
\begin{align*}
    {}^L\mc{O} = &[r_{2q}'' > r_{2q-1}'' \geq r_{2q-2}'' > \dots \geq r_2'' > r_1'' \geq r_0''] \cup
		\bigcup_i [\alpha_i, \alpha_i] \cup \bigcup_j [\beta_j, \beta_j],
\end{align*}
\item {\bf ${}^L\mf{g}$ of Type $C$:}\ \  Let ${}^L\mc{O} = [r_{2k+1} \geq r_{2k} \geq \dots \geq r_1]$ be a special orbit. Separate all {\bf odd} rows (which must be of the form $r_{2l+1} = r_{2l} = \alpha$), and even row pairs of the form $r_{2l} = r_{2l-1} = \beta$ and get
\begin{align*}
    {}^L\mc{O} = &[r_{2q+1}'' \geq r_{2q}'' > r_{2q-1}'' \geq \dots > r_3'' \geq r_2'' > r_1'']\cup
		\bigcup_i [\alpha_i, \alpha_i] \cup \bigcup_j [\beta_j, \beta_j],
\end{align*}
\item {\bf ${}^L\mf{g}$ of Type $D$:}\ \  Let ${}^L\mc{O} = [r_{2k+1} \geq r_{2k} \geq \dots \geq r_0]$ be a special, non-very even orbit. Separate all {\bf even} rows (which must be of the form $r_{2l+1} = r_{2l} = \alpha$), and all odd row pairs $r_{2l} = r_{2l-1} = \beta$ and get
  \begin{align*}
	{}^L\mc{O} = &[r_{2q+1}'' \geq r_{2q}'' > r_{2q-1}'' \geq \dots \geq r_2'' > r_1'' \geq r_0'']\cup
		\bigcup_i [\alpha_i, \alpha_i] \cup \bigcup_j [\beta_j, \beta_j],
	\end{align*}
\end{itemize}
Then $\overline{A}({}^L\mc{O}) = (\bb{Z}/2\bb{Z})^q$, regardless of the number of $\alpha_i$'s and $\beta_j$'s.
\end{proposition}

For any special orbit ${}^L\mathcal{O}$, consider
$\displaystyle {}^L\mc{O} = {}^L\mc{P} \cup {}^L\mc{Q},$
where ${}^L\mc{P}$ is the orbit of the same type as ${}^L\mathcal{O}$ without the $\alpha_i$'s, 
and ${}^L\mc{Q} := \bigcup_{i = 1}^{x} [\alpha_i, \alpha_i]$.
Note that ${}^L\mc{P}$ is even with $\overline{A}({}^L\mc{P}) 
\cong \overline{A}({}^L\mc{O})$ and the results in \cite{BV1985} 
hold for ${}^L\mc{P}$. More precisely, the coordinates of 
$\lambda_{\mc{O}}$ consists of:
\begin{itemize}
\item{\bf ${}^L\mf{g}$ of Type $B$:}\ \ integers coming from ${}^L\mc{P}$, half-integers coming from ${}^L\mc{Q}$.
\item{\bf ${}^L\mf{g}$ of Type $C$:}\ \ half-integers coming from ${}^L\mc{P}$, integers coming from ${}^L\mc{Q}$.
\item{\bf ${}^L\mf{g}$ of Type $D$:}\ \ integers coming from ${}^L\mc{P}$, half-integers coming from ${}^L\mc{Q}$.
\end{itemize}
Let ${}^L\mf{g}'$ be the Lie subalgebra of ${}^L\mf{g}$ whose roots are 
given by the roots $\alpha^{\vee}$ in ${}^L\mf{g}$ satisfying 
$\langle \alpha^{\vee}, \lambda_{\mc{O}} \rangle \in \bb{Z}$. 
We will study integral special unipotent representations
$\Pi({}^L\mc{O}')$, where ${}^L\mc{O}' \subset {}^L\mf{g}' = {}^L\mf{g}'_1 + {}^L\mf{g}'_2$
is an even orbit given by $\displaystyle {}^L\mc{O}' = {}^L\mc{P} + {}^L\mc{Q}$,
with
\begin{itemize}
\item{\bf ${}^L\mf{g}$ of Type $B$:}\ \ ${}^L\mc{P} \subset {}^L\mf{g}'_1$ is of Type $B$;\ \ \ ${}^L\mc{Q} \subset {}^L\mf{g}'_2$ is of Type $D$.
\item{\bf ${}^L\mf{g}$ of Type $C$:}\ \ ${}^L\mc{P} \subset {}^L\mf{g}'_1$ is of Type $C$;\ \ \ ${}^L\mc{Q} \subset {}^L\mf{g}'_2$ is of Type $C$.
\item{\bf ${}^L\mf{g}$ of Type $D$:}\ \ ${}^L\mc{P} \subset {}^L\mf{g}'_1$ is of Type $D$;\ \ \ ${}^L\mc{Q} \subset {}^L\mf{g}'_2$ is of Type $D$.
\end{itemize}
Note that $\overline{A}(\mc{O}) = \overline{A}({}^L\mc{O}) = \overline{A}({}^L\mc{P}) = \overline{A}({}^L\mc{O}')
= \overline{A}(\mc{O}')$.

We now study the unipotent representations $\Pi({}^L\mc{P})$ and $\Pi({}^L\mc{Q})$ individually.
For ${}^L\mc{Q}$, it has trivial Lusztig quotient, and {\it the} special unipotent representation attached to it is 
$\Pi({}^L\mc{Q}) =  \{\mathrm{Ind}_{GL(\alpha_1) \times \dots \times GL(\alpha_x)}^{G'_2}(\mathrm{triv} \boxtimes \dots \boxtimes \mathrm{triv})\}.$
Moreover, the (unique) left cell representation is $V^L(\mc{Q}) = J_{A_{\alpha_1-1} \times \dots \times A_{\alpha_x -1}}^{W(G'_2)}(\mathrm{sgn} \boxtimes \dots \boxtimes \mathrm{sgn})$.

On the other hand, suppose the left cell representation of $\mc{P}$ is $V^L(\mc{P}) = \displaystyle \bigoplus_{x \in \overline{A}(\mc{P})} \sigma'_x$. Then 
$$V^L(\mc{O}') = \bigoplus_{x \in \overline{A}(\mc{O}')} \sigma'_x \boxtimes J_{A_{\alpha_1-1} \times \dots \times A_{\alpha_x -1}}^{W(G'_2)}(\mathrm{sgn} \boxtimes \dots \boxtimes \mathrm{sgn}).$$

For $x \in \overline{A}(\mc{O}') \cong \overline{A}(\mc{O})$, let
\begin{align*}
\sigma_x &:= J_{W(G')}^{W(G)}(\sigma'_x \boxtimes J_{A_{\alpha_1-1} \times \dots \times A_{\alpha_x -1}}^{W(G'_2)}(\mathrm{sgn} \boxtimes \dots \boxtimes \mathrm{sgn}))\\
&= J_{W(G_1') \times A_{\alpha_1-1} \times \dots \times A_{\alpha_x -1}}^{W(G)}(\sigma'_x \boxtimes \mathrm{sgn} \boxtimes \dots \boxtimes \mathrm{sgn}).
\end{align*}
One can use Equation (4.6.5) of \cite{Lu1} to check that the right hand side is
non-zero and irreducible, and hence Step (VI) holds for all special $\mc{O}$. More explicitly, $\Pi({}^L\mc{O})$ is given by
$$\Pi({}^L\mc{O}) = \{ \mathrm{Ind}_{G_1' \times GL(\alpha_1) \times \dots \times GL(\alpha_x)}^{G}(J(\lambda',w_{\pi}\lambda') \boxtimes \mathrm{triv} \boxtimes \dots \boxtimes \mathrm{triv})\ |\ J(\lambda',w_{\pi}\lambda') \in \Pi({}^L\mc{P}) \}.$$

\subsection{Exceptional Lie algebras}
For exceptional special nilpotent orbits, we only focus on ${}^L\mc{O} \subset {}^L\mf{g}$ such that ${}^L\mc{O}$ is not even. Since there are no such orbits in $G_2$, we will only study exceptional Lie algebras of Type $E$ and $F$.


\medskip

In the following tables, we list all non-even special orbits ${}^L\mathcal{O}$ and their Lusztig-Spaltenstein dual $\mc{O}$.
In the third column, we get the subalgebra $\mf{g}'$ of $\mf{g}$ determined by $\lambda_{\mc{O}}$ (Step (I)). Then $\lambda_{\mc{O}}$ determines an even orbit ${}^L\mc{O}' \subset {}^L\mf{g}'$ (Step (II)), whose Lusztig-Spaltenstein dual $\mc{O}'$ is recorded in the fourth column. Afterwards, we can use \cite{Lu2} to compute $V^L(\mc{O}')$ (see Example \ref{eg:eg1} below), and then we have $V^L(\mc{O}) = \bigoplus_{\sigma' \in V^L(\mc{O}')} J_{W(G')}^{W(G)}(\sigma')$ (Step(VI)), which is given in the second last column of the table, with $\sigma_{sp} \in V^L(\mc{O})$ always appears first in the list. And the last column records the degree $\mathrm{deg}(\sigma_{sp}) = \mathrm{deg}(\sigma_{sp}')$. 

\medskip
The computations below are carried out by \texttt{LiE} \cite{LiE} and {\tt MATLAB}.

\vspace{5mm}

\subsubsection{Type $F_4$} The results for $F_4$ are as follows:\\

\begin{center}
\begin{tabular}{|c|c|c|c|c|c|c|c|} \hline
${}^L\mathcal{O}$
& $\mc{O}$
& $\mf{g}'$
& $\mc{O}'$
&  $V^L(\mc{O})$
& deg$(\sigma_{sp})$

 \cr
\hline
\hline
$\widetilde{A}_1$
& $F_4(a_1)$
& $B_4$
& $[711]$
& $4_2, 2_1$
& $1$
\cr
\hline 	
$A_1 + \widetilde{A}_1$
& $F_4(a_2)$
& $C_3+A_1$
& $[3^2]+[2]$
&  $9_1$
& $2$
\cr 	
\hline
$C_3$
& $A_2$
& $C_3+A_1$
& $[1^6]+[2]$
&  $8_2$
& $9$
\cr 	
\hline
\end{tabular}
\end{center}

\vspace{6mm}

\subsubsection{Type $E_6$} The results for $E_6$ are as follows:\\

\begin{center}
\begin{tabular}{|c|c|c|c|c|c|c|c|} \hline
${}^L\mathcal{O}$
& $\mc{O}$
& $\mf{g}'$
& $\mc{O}'$
&  $V^L(\mc{O})$
& deg$(\sigma_{sp})$

 \cr
\hline
\hline
$A_1$
& $E_6(a_1)$
& $A_5+A_1$
& $[6]+[1^2]$
&  $6_p$
& $1$
\cr
\hline 	
$2A_1$
& $D_5$
& $D_5$
& $[71^3]$
&  $20_p$
& $2$
\cr
\hline 	
$A_2+A_1$
& $D_5(a_1)$
& $A_5+A_1$
& $[41^2]+[1^2]$
&  $64_p$
& $4$
\cr
\hline 	

$A_2+2A_1$
& $A_4+A_1$
& $D_5$
& $[3^31]$
&  $60_p$
& $5$
\cr
\hline 	

$A_3$
& $A_4$
& $D_5$
& $[51^5]$
&  $81_p$
& $6$
\cr
\hline 	

$A_4+A_1$
& $A_2+2A_1$
& $A_5+A_1$
& $[21^4]+[1^2]$
&  $60_p'$
& $11$
\cr
\hline 	

$D_5(a_1)$
& $A_2+A_1$
& $D_5$
& $[2^21^6]$
&  $64_p'$
& $13$
\cr
\hline 	
\end{tabular}
\end{center}

\subsubsection{Type $E_7$} \label{sec-e7}
The results for $E_7$ are as follows:\\
\small{\begin{center}
\begin{tabular}{|c|c|c|c|c|c|c|c|} \hline
${}^L\mathcal{O}$
& $\mc{O}$
& $\mf{g}'$
& $\mc{O}'$
&  $V^L(\mc{O})$
& deg$(\sigma_{sp})$

 \cr
\hline
\hline
$A_1$
& $E_7(a_1)$
& $D_6+A_1$
& $[11,1]+[1^2]$
&  $7_a'$
& $1$
\cr
\hline 	
$2A_1$
& $E_7(a_2)$
& $D_6+A_1$
& $[91^3] + [2]$
&  $27_a$
& $2$
\cr
\hline 	
$A_2+A_1$
& $E_6(a_1)$
& $D_6+A_1$
& $[731^2]+[1^2]$
&  $120_a, 15_a'$
& $4$
\cr
\hline 	

$A_2+2A_1$
& $E_7(a_4)$
& $D_6+A_1$
& $[53^21] + [2]$
&  $189_b'$
& $5$
\cr
\hline 	

$A_3$
& $D_6(a_1)$
& $D_6+A_1$
& $[71^5] + [2]$
&  $210_a$
& $6$
\cr
\hline 	

$D_4(a_1)+A_1$
& $E_6(a_3)$
& $D_6+A_1$
& $[531^4] + [1^2]$
&  $405_a, 216_a'$
& $8$
\cr
\hline

$A_3+A_2$
& $D_5(a_1)+A_1$
& $D_6+A_1$
& $[3^31^3] + [2]$
&  $378_a'$
& $9$
\cr
\hline

$A_4+A_1$
& $A_4+A_1$
& $D_6+A_1$
& $[3^22^21^2]+[1^2]$
&  $512_a'$
& $11$
\cr
\hline 	

$D_5(a_1)$
& $A_4$
& $D_6+A_1$
& $[3^21^6] + [2]$
&  $420_a', 84_a'$
& $13$
\cr
\hline 	

$D_5 + A_1$
& $2A_2$
& $D_6+A_1$
& $[31^9] + [1^2]$
&  $168_a'$
& $21$
\cr
\hline 	

$D_6(a_1)$
& $A_3$
& $D_6+A_1$
& $[2^21^8] + [2]$
&  $210_a'$
& $21$
\cr
\hline 	

\end{tabular}
\end{center}}

\vspace{5mm}

\subsubsection{Type $E_8$} \label{sec-e8}
\normalsize{The results for $E_8$ are as follows:}\\
{\small
\begin{center}
\begin{tabular}{|c|c|c|c|c|c|c|c|} \hline
${}^L\mathcal{O}$
& $\mc{O}$
& $\mf{g}'$
& $\mc{O}'$
&  $V^L(\mc{O})$
& deg$(\sigma_{sp})$

 \cr
\hline
\hline
$A_1$
& $E_8(a_1)$
& $E_7+A_1$
& $E_7+[1^2]$
&  $8_z$
& $1$
\cr
\hline 	
$2A_1$
& $E_8(a_2)$
& $D_8$
& $[13,1^3]$
&  $35_x$
& $2$
\cr
\hline 	
$A_2+A_1$
& $E_8(a_4)$
& $E_7+A_1$
& $E_7(a_3)+[1^2]$
&  $210_x, 50_x$
& $4$
\cr
\hline 	

$A_2+2A_1$
& $E_8(b_4)$
& $D_8$
& $[93^21]$
&  $560_z$
& $5$
\cr
\hline 	

$A_3$
& $E_7(a_1)$
& $D_8$
& $[11,1^5]$
&  $567_x$
& $6$
\cr
\hline 	

$D_4(a_1)+A_1$
& $E_8(a_6)$
& $E_7+A_1$
& $E_7(a_5) + [1^2]$
&  $1400_x, 1050_x, 175_x$
& $8$
\cr
\hline

$A_3+A_2$
& $D_7(a_1)$
& $D_8$
& $[73^21^3]$
&  $3240_z$
& $9$
\cr
\hline

$A_4+A_1$
& $E_6(a_1)+A_1$
& $E_7+A_1$
& $D_5(a_1)+[1^2]$
&  $4096_z$
& $11$
\cr
\hline 	

$A_4 + 2A_1$
& $D_7(a_2)$
& $D_8$
& $[53^31^2]$
&  $4200_x, 840_x$
& $12$
\cr
\hline

$D_5(a_1)$
& $E_6(a_1)$
& $D_8$
& $[731^6]$
&  $2800_z, 700_{xx}$
& $13$
\cr
\hline 	

$D_5(a_1)+A_1$
& $E_7(a_4)$
& $E_7+A_1$
& $(A_3+A_2) + [2]$
&  $6075_x$
& $14$
\cr
\hline

$A_4+A_2+A_1$
& $A_6+A_1$
& $E_7+A_1$
& $(A_3+A_2+A_1) + [1^2]$
&  $2835_x$
& $14$
\cr
\hline

$D_6(a_1)$
& $E_6(a_3)$
& $D_8$
& $[531^8]$
&  $5600_z', 3200_x'$
& $21$
\cr
\hline 	

$A_6+A_1$
& $A_4+A_2+A_1$
& $E_7+A_1$
& $(A_2+3A_1) + [1^2]$
&  $2835_x'$
& $22$
\cr
\hline

$E_7(a_4)$
& $D_5(a_1)+A_1$
& $E_7+A_1$
& $(A_2+2A_1) + [2]$
&  $6075_x'$
& $22$
\cr
\hline

$D_7(a_2)$
& $A_4 + 2A_1$
& $D_8$
& $[3^22^21^6]$
&  $4200_x', 840_x'$
& $24$
\cr
\hline

$E_6(a_1)+A_1$
& $A_4+A_1$
& $E_7+A_1$
& $(A_2+A_1) + [1^2]$
&  $4096_z'$
& $26$
\cr
\hline

$E_7(a_3)$
& $A_4$
& $E_7+A_1$
& $A_2 + [2]$
&  $2268_x', 972_x'$
& $30$
\cr
\hline

$E_7(a_1)$
& $A_3$
& $E_7+A_1$
& $A_1 + [2]$
&  $567_x'$
& $46$
\cr
\hline

\end{tabular}
\end{center}}

\bigskip
Here is an example on how the above results are obtained:
\begin{example} \label{eg:eg1}
Let ${}^L\mc{O} = D_4(a_1) + A_1$ be a nilpotent orbit of Type $E_8$,
By calculating $\frac{1}{2}{}^LH$ explicitly, one can check that
${}^L\mf{g}'$ is of Type $E_7 + A_1$, and the coroots $\alpha^{\vee}$
such that $\langle \alpha^{\vee}, \frac{1}{2}{}^LH\rangle = 0$
forms a Lie subalgebra ${}^L\mf{l}'$ of Type $(A_5 + A_1) + 0$.
Then ${}^L\mc{O}' \subset {}^L\mf{g}'$ is the even orbit 
$${}^L\mc{O}' = \mathrm{Ind}_{{}^L\mf{l}'}^{{}^L\mf{g}'}(0) = D_4(a_1) + [2].$$
By looking at the tables of \cite{Car}, one can check that 
Lusztig-Spaltenstein dual of ${}^L\mathcal{O}$ and ${}^L\mc{O}'$
are $\mc{O} = E_8(a_6)$ and $\mc{O}' = E_7(a_5) + [1^2]$
respectively. Also, one can check from \cite{CM} that 
$\overline{A}(\mc{O}) = \overline{A}(\mc{O}') = S_3$.

We now study the left cell $V^L(\mc{O}')$: 
Firstly, note that $E_7(a_5) = \mathrm{Ind}_{E_6}^{E_7}(D_4(a_1))$.
By \cite{Lu2}, the special piece attached to $D_4(a_1)$
is equal to $\{D_4(a_1), A_3+A_1, 2A_2+A_1\}$, and their
Springer representation constitute the left cell
$$V^L(D_4(a_1)) = 80_s \oplus 60_s \oplus 10_s$$ 
(in fact,
$V^L(D_4(a_1))$ can also be obtained
directly from (4.11.2) of \cite{Lu1}, but this
perspective is useful in determining left cells
for classical orbits with large Lusztig quotient).

Let $[1^3]$, $[21]$, $[3]$ be the conjugacy classes
of $S_3$, then by Proposition 4.14 of \cite{BV1985} 
(which is valid since $\mc{O}'$ is even),
$$V^L(\mc{O}') =  J_{E_6 + A_1}^{E_7+A_1}(80_s \boxtimes \mathrm{sgn}) \oplus
J_{E_6+A_1}^{E_7+A_1}(60_s \boxtimes \mathrm{sgn}) \oplus J_{E_6+A_1}^{E_7+A_1}(10_s \boxtimes \mathrm{sgn}).$$
Using the notations of Step (III) in the first section, 
$\sigma'_{[1^3]} = J_{E_6 + A_1}^{E_7+A_1}(80_s \boxtimes \mathrm{sgn})$, $\sigma'_{[21]} = J_{E_6 + A_1}^{E_7+A_1}(60_s \boxtimes \mathrm{sgn})$ and $\sigma'_{[3]} = J_{E_6 + A_1}^{E_7+A_1}(10_s \boxtimes \mathrm{sgn})$.

By Proposition 3.18 of \cite{BV1983} and (4.13.3) of \cite{Lu1}, we have
\begin{align*}
V^L(\mc{O}) &=  J_{E_7+A_1}^{E_8}(\sigma'_{[1^3]}) \oplus
J_{E_7+A_1}^{E_8}(\sigma'_{[21]}) \oplus J_{E_7+A_1}^{E_8}(\sigma'_{[3]}) \\
&= J_{E_6 + A_1}^{E_8}(80_s \boxtimes \mathrm{sgn}) \oplus J_{E_6+A_1}^{E_8}(60_s \boxtimes \mathrm{sgn}) \oplus J_{E_6+A_1}^{E_8}(10_s \boxtimes \mathrm{sgn})\\
&= 1400_x \oplus 1050_x \oplus 175_x.
\end{align*}
Moreover, $\sigma_{sp} = 1400_x$ is the special representation corresponding to the $\mc{O}$ under the Springer correspondence.
Therefore, this verifies Step (VI) and Table A.2.4 for this orbit. 
%
\end{example}

We end the Appendix by mentioning the character formulas of the special unipotent 
representations $\Pi({}^L\mc{O})$ when $\mc{O}$ is equal to the three exceptional orbits, i.e. 
$A_4 + A_1$ in $E_7$, $A_4+A_1$ and $E_6(a_1) + A_1$
in $E_8$. 

From the Tables \ref{sec-e7} and \ref{sec-e8} above, one can check that for the $\mc{O}' \subset \mathfrak{g}'$
corresponding to these $\mc{O}$'s, $\overline{A}(\mc{O}) \cong \overline{A}(\mc{O}') \cong S_2$ and
$$V^L(\mc{O}') = \sigma_{[1^2]} \oplus \sigma_{[2]}.$$
However, their truncated inductions are given by
$$J_{W'}^{W}(\sigma_{[1^2]}') = \sigma_{sp},\ \ \ J_{W'}^{W}(\sigma_{[2]}') = 0,$$
so $V^L(\mc{O}) = \sigma_{sp}$ contains one irreducible representation only.
In other words, the second bullet point of Step (VI) does not hold for these orbits,
and one cannot express their character formulas in the form of Theorem III of \cite{BV1985}.

Nevertheless, the character formulas of $\Pi({}^L\mc{O})$
can still be obtained by using Step (IV), and the number of representations
is equal to the number of irreducible representations of 
$\overline{A}(\mc{O}) \cong \overline{A}(\mc{O}')$.



\end{document}